\newtheorem{theorem}{Theorem}[section]
\newtheorem{lem}[theorem]{Lemma}
\newtheorem{cor}[theorem]{Corollary}
\newtheorem{prop}[theorem]{Proposition}
\newtheorem{rem}[theorem]{Remark}
\theoremstyle{definition}
\newcommand\gal{\mathrm{Gal}}
\theoremstyle{remark}
\numberwithin{equation}{section}
\begin{document}

\title[Galois groups and Genera of Quasi-cyclotomic Function Fields]{Galois groups and Genera of a kind of Quasi-cyclotomic Function Fields}

%    Information for first author
\author{Min Sha}
%    Address of record for the research reported here
\address{Department of Mathematical Sciences, Tsinghua University, Beijing 100084, P.R. China}
%    Current address
\curraddr{} \email{shamin2010@gmail.com}
%    \thanks will become a 1st page footnote.
\thanks{}

%    Information for second author
\author{Linsheng Yin}
\address{Department of Mathematical Sciences, Tsinghua University, Beijing 100084, P.R. China}
\email{lsyin@math.tsinghua.edu.cn}
\thanks{This work was supported by NSFC Project No.10571097.}

%    General info
\subjclass[2010]{Primary 11R58; Secondary 11R32, 11R60.}

\date{}

\dedicatory{}

\keywords{cyclotomic function field, quasi-cyclotomic function field, Galois group, genus}

\begin{abstract}
We call a $(q-1)$-th Kummer extension of a cyclotomic function field
a quasi-cyclotomic function field if it is Galois, but non-abelian,
over the rational function field with the constant field of $q$
elements. In this paper, we determine the structure of the Galois
groups of a kind of quasi-cyclotomic function fields over the base
field. We also give the genus formulae of them.
\end{abstract}

\maketitle

\section{Introduction}                                                            % Section 1

We call a $(q-1)$-th Kummer extension of a cyclotomic function field
a quasi-cyclotomic function field if it is Galois, but non-abelian,
over the rational function field $k=\mathbb{F}_{q}(T)$. A large kind
of such fields were described explicitly in \cite{LY} following the works
in \cite{BGKY} and \cite{BY}. In this paper, we describe the Galois groups of
this kind of quasi-cyclotomic function fields by generators and
relations following the method in \cite{YZ} by using the results in \cite{BY}
and \cite{LY}. We also give the genus formulae of them.

Now we recall the constructions of the quasi-cyclotomic function
fields in \cite{LY}.

Let $k=\mathbb{F}_{q}(T)$ be the rational function
field over the finite field $\mathbb{F}_{q}$ of $q$ elements. In
this paper we always assume that the characteristic of $k$ is an odd
prime number $p$. Put $\mathbb{A}=\mathbb{F}_{q}[T]$. Let $\Omega$
be the completion of the algebraic closure of
$\mathbb{F}_{q}((1/T))$ at the place $1/T$. Let $k^{ac}$ be
the algebraic closure of $k$ in $\Omega$. Let $k^{ab}$ be the
maximal abelian extension of $k$ in $k^{ac}$.

Let $\bar{\pi} \in \Omega$ be the period of the Carlitz module,
namely the lattice $\bar{\pi}\mathbb{A}$ of rank one corresponds to
the Carlitz module. The Carlitz exponential function ${\bf e}_{C}$ is
defined by
\begin{equation}
{\bf e}_{C}(x)=x\prod\limits_{0\neq u\in
\bar{\pi}\mathbb{A}}\big(1-\frac{x}{u}\big), \quad x\in \Omega.
 \notag
\end{equation}
For $A\in \mathbb{F}_{q}((1/T))$, let $\{A\}$ be the representation
in $(\mathbb{F}_{q}((1/T))\setminus \mathbb{A})\cup \{0\}$
 of $A$ modulo $\mathbb{A}$, we define
\begin{equation}
{\rm sin}(A)=\sqrt[q-1]{-1}\cdot {\bf e}_{C}(\bar{\pi}\{A\}/{\rm
sgn}(\{A\})),
 \notag
\end{equation}
where sgn is a fixed sign function on $\mathbb{F}_{q}((1/T))$.
For the definition of sign function, see \cite[Definition 7.2.1]{Goss}.
%i.e. $\sgn(\sum\limits_{i\ge m}a_{i}\frac{1}{T^{i}})=a_{m}$, where $a_{m}\ne 0$.

Let $\mathcal{A}$ be the free ablian group generated by the symbols
$[A]$, $A\in k\setminus \mathbb{A}$. Define two homomorphisms
\begin{center}
sin, ${\bf e}$: $\mathcal{A} \to k^{ab*}$
\end{center}
such that ${\rm sin}([A])={\rm sin}(A)$ and ${\bf e}([A])={\bf e}_{C}(\bar{\pi}A)$ for
$A\not\in \mathbb{A}$, and ${\rm sin}([A])=1$ and ${\bf e}([A])=1$ otherwise.

Fix a total order $<$ in $\mathbb{A}$. Write $d_{A}$ for the degree
of $A \in \mathbb{A}$. Let $M \in \mathbb{A}$ be monic. Put
\begin{center}
$S_{M}=\Big\{$monic prime factors of $M\Big\}$.
\end{center}
Fix a generator $\gamma$ of $\mathbb{F}_{q}^{*}$. For $P,Q\in S_{M}$
with $P<Q$, let
\begin{equation}
{\bf a}_{PQ}=\sum\limits_{\substack{d_{A}<d_{Q} \\
A:{\rm monic}}}\sum\limits_{\substack{d_{B}<d_{P} \\
B:{\rm
monic}}}\sum\limits_{s=1}^{q-1}s\Big([\frac{BQ+\gamma^{-s}A}{PQ}]-[\frac{AP+\gamma^{-s}B}{PQ}]\Big).
\notag
\end{equation}
Notice that there is a print mistake in \cite{LY}, where $s$ runs from $1$ to $q-2$ in the definition of ${\bf a}_{PQ}$.
We also want to indicate that the homomorphism ${\bf e}$ gives the same value in the
${\bf a}_{PQ}$ here and in the ${\bf a}_{PQ}$ of \cite{BY}.

 We put
\begin{equation}
\begin{array}{lll}
 u_{PQ}=\left\{ \begin{array}{ll}
                 {\rm sin}{\bf a}_{PQ},  & \textrm{if $2|d_{P},2|d_{Q}$},\\
                  \\
                 \sqrt{P}{\rm sin}{\bf a}_{PQ},  & \textrm{if $2|d_{P},2\nmid d_{Q}$},\\
                 \\
                 \sqrt{Q}{\rm sin}{\bf a}_{PQ},  & \textrm{if $2\nmid d_{P},2|d_{Q}$},\\
                 \\
                 \sqrt{PQ}{\rm sin}{\bf a}_{PQ},  & \textrm{if $2\nmid d_{P},2\nmid d_{Q}$}.\\
                 \end{array} \right.
\end{array}
\notag
\end{equation}

Set $K=k({\bf e}_C(\frac{\bar{\pi}}{M}))$, which is the cyclotomic
function field of conductor $M$   whose
Galois group over $k$ is canonically isomorphic to $(\mathbb{A}/M\mathbb{A})^{*}$.
Since $u_{PQ}\in K$, put $\widetilde{K}=K(\sqrt[q-1]{u_{PQ}})$. By
\cite[Theorem 3]{LY}, $\widetilde{K}$ is a quasi-cyclotomic function field
over $k$, which implies that $[\widetilde{K}:k]=(q-1)\Phi(M)$, where $\Phi(M)$ is the number of
elements in $(\mathbb{A}/M\mathbb{A})^{*}$.

\section{The Galois groups}                                                                    % Section 2
Let $G=\gal({K}/k)$ and
$\widetilde{G}=\gal(\widetilde{K}/k)$ be the Galois groups of
the extensions $K/k$ and $\widetilde{K}/k$ respectively. In this
section, we determine $\widetilde{G}$ by generators and relations.

In the sequel, we write $w=q-1$ and $u=u_{PQ}$ for simplicity.

First, we want to indicate a basic fact without proof. We will use it
several times without indication.
\begin{lem}
There exists $a\in \mathbb{F}_{q}^{*}$ such that
$\sin{\bf a}_{PQ}=a{\bf e}({\bf a}_{PQ})$.
\end{lem}

Clearly Gal$(\widetilde{K}/K)$ is isomorphic to $\mathbb{Z}/w\mathbb{Z}$.
Recall that $\gamma$ is a fixed generator of $\mathbb{F}_{q}^{*}$.
Let $\epsilon\in{\rm Gal}(\widetilde{K}/K)$ be a generator such that
$$
\epsilon(\sqrt[w]{u})=\gamma\sqrt[w]{u}.
$$
Denote by $\log_{\gamma}$ the isomorphism
$$
\log_{\gamma}: \quad\mathbb{F}_{q}^{*} \to
 \mathbb{Z}/w\mathbb{Z},\ \gamma^{i} \mapsto \bar{i}.
$$
Each element of $G$ has $w$ liftings in $\widetilde{G}$. Then we have a coarse
description about $\widetilde{G}$.

\begin{lem}\label{lift}
For any $\sigma \in G$, choosing $v_{\sigma}\in K^{*}$ such that                                % the lifting lemma
$\sigma(u)=v_{\sigma}^{w}u$, we can define a lifting $\tilde{\sigma} \in
\widetilde{G}$ of $\sigma$ by
$\tilde{\sigma}(\sqrt[w]{u})=v_{\sigma}\sqrt[w]{u}$. Then
$\widetilde{G}=\{\tilde{\sigma}\epsilon^{j}|\sigma \in G, 0\le j\le
w-1\}$, and the multiplication in $\widetilde{G}$ is given by
$\widetilde{\sigma\tau}=\tilde{\sigma}\tilde{\tau}\epsilon^{{\rm
log}_{\gamma}i(\sigma,\tau)}$, where
$i(\sigma,\tau)=\frac{v_{\sigma\tau}}{v_{\sigma}\sigma(v_{\tau})}\in
\mathbb{F}_{q}^{*}$. For any $\tilde{\sigma} \in \widetilde{G}$,
$\epsilon$ and $(\tilde{\sigma})^{w}$ belong to the center of
$\widetilde{G}$.
\end{lem}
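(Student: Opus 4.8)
The plan is to treat $\widetilde G$ as a central extension of the abelian group $G$ by the cyclic group $\langle\epsilon\rangle=\mathrm{Gal}(\widetilde K/K)\cong\mathbb Z/w\mathbb Z$, and to carry out every computation on the single generator $\sqrt[w]{u}$, since $\widetilde K=K(\sqrt[w]{u})$ and any automorphism is determined by its restriction to $K$ together with its value on $\sqrt[w]{u}$. First I would record the ambient fact that $\mu_w=\mathbb F_q^{*}\subset K$, so that $K(\sqrt[w]{u})/K$ is a genuine degree-$w$ cyclic Kummer extension and $\gamma$ is a primitive $w$-th root of unity; this is exactly what makes $\epsilon\colon\sqrt[w]{u}\mapsto\gamma\sqrt[w]{u}$ a generator of $\mathrm{Gal}(\widetilde K/K)$.

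For the existence of $v_\sigma$ I would invoke the construction of $u=u_{PQ}$ in [4]: the property that makes $\widetilde K/k$ Galois in the \emph{untwisted} form needed here is that $\sigma(u)/u\in (K^{*})^{w}$ for every $\sigma\in G$ (that is, $\sigma(u)\equiv u$, not merely $\equiv u^{j}$, modulo $w$-th powers). Granting this, choose $v_\sigma\in K^{*}$ with $\sigma(u)=v_\sigma^{w}u$. Then $v_\sigma\sqrt[w]{u}$ is a root of $X^{w}-\sigma(u)$, so the $\sigma$-semilinear prescription $\sqrt[w]{u}\mapsto v_\sigma\sqrt[w]{u}$ extends $\sigma$ to a well-defined $\tilde\sigma\in\widetilde G$. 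Two admissible choices of $v_\sigma$ differ by an element of $\mu_w$, so the liftings of a fixed $\sigma$ are precisely $\tilde\sigma\langle\epsilon\rangle$; since $[\widetilde G:\langle\epsilon\rangle]=|G|$ and every $\sigma$ is attained, this already yields $\widetilde G=\{\tilde\sigma\epsilon^{j}\mid\sigma\in G,\ 0\le j\le w-1\}$.

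The multiplication law is then a direct comparison on $\sqrt[w]{u}$. Both $\widetilde{\sigma\tau}$ and $\tilde\sigma\tilde\tau$ restrict to $\sigma\tau$ on $K$, while $\widetilde{\sigma\tau}(\sqrt[w]{u})=v_{\sigma\tau}\sqrt[w]{u}$ and $\tilde\sigma\tilde\tau(\sqrt[w]{u})=\tilde\sigma(v_\tau\sqrt[w]{u})=\sigma(v_\tau)v_\sigma\sqrt[w]{u}$, using $\tilde\sigma|_K=\sigma$. Their ratio is $i(\sigma,\tau)=v_{\sigma\tau}/(v_\sigma\sigma(v_\tau))$, and a short computation with $v_\sigma^{w}=\sigma(u)/u$ gives $i(\sigma,\tau)^{w}=1$, so $i(\sigma,\tau)\in\mu_w=\mathbb F_q^{*}$. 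Applying $\tilde\sigma\tilde\tau\epsilon^{m}$ with $\gamma^{m}=i(\sigma,\tau)$ to $\sqrt[w]{u}$ reproduces $v_{\sigma\tau}\sqrt[w]{u}$ (here $i(\sigma,\tau)$ is a constant, hence fixed by $\tilde\sigma\tilde\tau$), and since the two automorphisms also agree on $K$ they coincide; this is exactly $\widetilde{\sigma\tau}=\tilde\sigma\tilde\tau\epsilon^{\log_\gamma i(\sigma,\tau)}$.

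Finally, for the centre: $\epsilon$ commutes with every $\tilde\tau$ because $\epsilon\tilde\tau$ and $\tilde\tau\epsilon$ both restrict to $\tau$ on $K$ and both send $\sqrt[w]{u}\mapsto\gamma v_\tau\sqrt[w]{u}$ (using that $\epsilon$ fixes $v_\tau\in K$ and that $\tau$ fixes $\gamma\in\mathbb F_q$); as $\widetilde G$ is generated by the $\tilde\tau$ together with $\epsilon$, this places $\epsilon$, and hence all of $\langle\epsilon\rangle$, in the centre. For $(\tilde\sigma)^{w}$ I would argue group-theoretically: since $\widetilde G/\langle\epsilon\rangle\cong G$ is abelian and $\langle\epsilon\rangle$ is central, every commutator $[\tilde\sigma,\tilde\tau]$ lies in $\langle\epsilon\rangle$, whence $[\tilde\sigma^{w},\tilde\tau]=[\tilde\sigma,\tilde\tau]^{w}=1$ because $\langle\epsilon\rangle$ has order $w$; combined with $[\tilde\sigma^{w},\epsilon]=1$ and the generation of $\widetilde G$, this shows $(\tilde\sigma)^{w}$ is central. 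The only non-formal input is the existence of $v_\sigma$ with exponent exactly $1$, and I expect that to be the genuine obstacle, resting on the explicit shape of $u_{PQ}$ and its Galois conjugates from [4]; the cocycle bookkeeping above is routine once the order of composition is fixed.
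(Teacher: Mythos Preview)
Your proposal is correct and matches the paper's approach exactly: the paper's own proof just says ``By [Sect.~5.1.2, 4], there exists such $v_\sigma\in K^{*}$ for $\sigma\in G$; the rest of the proof is trivial, which we refer to the proof of [Lemma~1, 2].'' You have spelled out precisely those routine details---the Kummer cocycle computation on $\sqrt[w]{u}$ and the central-extension commutator argument---and correctly flagged the existence of $v_\sigma$ (with exponent~$1$) from~[4] as the only substantive input.
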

\begin{proof}
By \cite[Section 5.1.2]{LY}, there exists such $v_{\sigma}\in K^{*}$ for any
$\sigma \in G$. The rest of the proof is trivial, we refer to
the proof of \cite[Lemma 1]{YZ}.
\end{proof}

 Let $M=P_{1}^{r_{1}}P_{2}^{r_{2}}\cdots
P_{n}^{r_{n}}$ be the prime decomposition of $M$. We have the
isomorphism:
\begin{center}
$G \cong (\mathbb{A}/M\mathbb{A})^{*} \cong
(\mathbb{A}/P_{1}^{r_{1}}\mathbb{A})^{*}\times
(\mathbb{A}/P_{2}^{r_{2}}\mathbb{A})^{*}\times \cdots \times
(\mathbb{A}/P_{n}^{r_{n}}\mathbb{A})^{*}$.
\end{center}
Different from the case of characteristic 0, now each
$(\mathbb{A}/P_{i}^{r_{i}}\mathbb{A})^{*}$ is not always cyclic. But
we have the decomposition
$(\mathbb{A}/P_{i}^{r_{i}}\mathbb{A})^{*}\cong
(\mathbb{A}/P_{i}^{r_{i}}\mathbb{A})^{(1)}\times
(\mathbb{A}/P_{i}\mathbb{A})^{*} $, where
$(\mathbb{A}/P_{i}^{r_{i}}\mathbb{A})^{(1)}$ is a $p$-group of order
$|P_{i}|^{r_{i}-1}$ and $(\mathbb{A}/P_{i}\mathbb{A})^{*}$ is a
cyclic group of order $|P_{i}|-1$, where $|P_{i}|=q^{d_{P_{i}}}$, see \cite[Proposition 1.6]{Rosen}.
For $1\le i\le n$, since the inertia group of $P_{i}$ in $K$ is isomorphic to $(\mathbb{A}/P_{i}^{r_{i}}\mathbb{A})^{*}$,
we choose a $\sigma_{P_i}\in G$ with
$\langle\sigma_{P_i}\rangle\cong (\mathbb{A}/P_{i}\mathbb{A})^{*}$ such that $\sigma_{P_i}$ is contained in
the inertia group of $P_{i}$. Then we have
\[
G=G^{(p)}\times G^{\prime},
\]
where $G^{\prime}=\langle\sigma_{P_1}\rangle\times
\cdots\times\langle\sigma_{P_n}\rangle$, and $G^{(p)}$ is the $p$-Sylow subgroup
of $G$. In fact,
$G^{(p)}\cong(\mathbb{A}/P_{1}^{r_{1}}\mathbb{A})^{(1)}\times\cdots\times(\mathbb{A}/P_{n}^{r_{n}}\mathbb{A})^{(1)}$.

 Let
$\widetilde{G^{(p)}}$ and $\widetilde {G^{\prime}}$ be the subgroups of
$\widetilde G$ consisting of all liftings of the elements in
$G^{(p)}$ and in $G^{\prime}$ respectively. It is easy to see that both of them are
normal subgroups of $\widetilde G$. Then we can get a decomposition of $\widetilde G$.
\begin{lem}    \label{product}                                                                    % direct product of the Galois group
Let $\widetilde{G}^{(p)}$ be the $p$-Sylow subgroup of $\widetilde
G$. Then
\[
\widetilde G^{(p)}\cong\widetilde{G^{(p)}}/\langle\epsilon\rangle\cong
{G^{(p)}}.
\]
Furthermore, we have $\widetilde G=\widetilde
{G}^{(p)}\times\widetilde {G^{\prime}}$, and $\widetilde{G}^{(p)}$ is contained in the center of $\widetilde G$.
\end{lem}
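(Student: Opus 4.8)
The plan is to read everything off the central extension
\[
1 \longrightarrow \langle\epsilon\rangle \longrightarrow \widetilde{G} \overset{\pi}{\longrightarrow} G \longrightarrow 1,
\]
where $\pi$ is restriction to $K$, so that $\ker\pi=\mathrm{Gal}(\widetilde K/K)=\langle\epsilon\rangle$, and $\widetilde{G^{(p)}}=\pi^{-1}(G^{(p)})$, $\widetilde{G'}=\pi^{-1}(G')$. The arithmetic input that drives the whole argument is coprimality to $p$: since $q=p^{f}$ we have $w=q-1\equiv -1\pmod p$ and each $|P_i|-1=q^{d_{P_i}}-1\equiv -1\pmod p$, whence $|\langle\epsilon\rangle|=w$ and $|G'|=\prod_i(|P_i|-1)$ are both prime to $p$. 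As $|\widetilde{G}|=w\,|G|=w\,|G^{(p)}|\,|G'|$, the $p$-part of $|\widetilde G|$ equals $|G^{(p)}|$, so every Sylow $p$-subgroup of $\widetilde G$ has order $|G^{(p)}|$.

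For the isomorphism $\widetilde G^{(p)}\cong G^{(p)}$ I would take any Sylow $p$-subgroup $P$ of $\widetilde G$: it meets $\ker\pi=\langle\epsilon\rangle$ trivially (coprime orders), so $\pi|_P$ is injective, and by the order count $\pi(P)$ is a Sylow $p$-subgroup of $G$. Because $G=G^{(p)}\times G'$ makes $G^{(p)}$ the unique Sylow $p$-subgroup, $\pi(P)=G^{(p)}$ and hence $P\cong G^{(p)}$.

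The crux is to identify $\widetilde G^{(p)}$ inside $\widetilde G$ and to see it is normal. Here I would work within $\widetilde{G^{(p)}}=\pi^{-1}(G^{(p)})$, which is normal in $\widetilde G$ (as stated in the excerpt), has order $w\,|G^{(p)}|$, and contains $\langle\epsilon\rangle$. Since $\epsilon$ is central in $\widetilde G$ by Lemma 2.1 and has order $w$ prime to $p$, any Sylow $p$-subgroup $P$ of $\widetilde{G^{(p)}}$ satisfies $P\cap\langle\epsilon\rangle=1$, $[P,\langle\epsilon\rangle]=1$, and $P\langle\epsilon\rangle=\widetilde{G^{(p)}}$ by order; thus $\widetilde{G^{(p)}}=\langle\epsilon\rangle\times P$. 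Consequently $P\cong\widetilde{G^{(p)}}/\langle\epsilon\rangle\cong G^{(p)}$, and $P$ is the unique, hence characteristic, Sylow $p$-subgroup of $\widetilde{G^{(p)}}$. Being characteristic in a normal subgroup, $P$ is normal in $\widetilde G$, and since $|P|$ equals the $p$-part of $|\widetilde G|$ it is \emph{the} Sylow $p$-subgroup $\widetilde G^{(p)}$, which establishes the displayed chain $\widetilde G^{(p)}\cong\widetilde{G^{(p)}}/\langle\epsilon\rangle\cong G^{(p)}$.

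For the decomposition $\widetilde G=\widetilde G^{(p)}\times\widetilde{G'}$, both factors are normal ($\widetilde G^{(p)}$ just shown, $\widetilde{G'}$ by the excerpt); their orders $|G^{(p)}|$ and $w\,|G'|$ are coprime, forcing $\widetilde G^{(p)}\cap\widetilde{G'}=1$; and $|\widetilde G^{(p)}|\,|\widetilde{G'}|=w\,|G^{(p)}|\,|G'|=|\widetilde G|$ gives $\widetilde G^{(p)}\widetilde{G'}=\widetilde G$, so the two normal subgroups generate $\widetilde G$ with trivial intersection and yield an internal direct product. I expect the main obstacle to be the third step: extracting the splitting $\widetilde{G^{(p)}}=\langle\epsilon\rangle\times\widetilde G^{(p)}$ and the resulting normality of $\widetilde G^{(p)}$, where the centrality of $\epsilon$ and the coprimality of $w$ with $p$ must be used together; the remaining steps are essentially order bookkeeping.
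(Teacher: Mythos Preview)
Your argument is correct and follows the same skeleton as the paper: order counting on the central extension, coprimality of $w$ with $p$, and the identification $\widetilde{G^{(p)}}/\langle\epsilon\rangle\cong G^{(p)}$ via restriction. The one substantive difference is in how normality of $\widetilde G^{(p)}$ is obtained. You split $\widetilde{G^{(p)}}=\langle\epsilon\rangle\times P$, observe that $P$ is the unique (hence characteristic) Sylow $p$-subgroup there, and invoke ``characteristic in normal implies normal.'' The paper instead uses the last clause of Lemma~2.1: every $w$-th power in $\widetilde G$ is central, and since $\gcd(w,p)=1$ the $w$-th power map is a bijection on the $p$-group $\widetilde G^{(p)}$, so $(\widetilde G^{(p)})^{w}=\widetilde G^{(p)}$ and $\widetilde G^{(p)}$ is actually \emph{central} in $\widetilde G$, not merely normal. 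Their route is a touch shorter and yields a stronger conclusion (centrality), which they exploit later when showing that the first ramification group $\widetilde G_1\subset\widetilde G^{(p)}$ lies in the center of $\widetilde I_L$; your route is a clean, self-contained group-theoretic argument that does not rely on that clause of Lemma~2.1.
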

\begin{proof}
Since $|\widetilde{G}|=w|G|=|G^{(p)}|\cdot|\widetilde{G^{\prime}}|$, we
have $|\widetilde{G}^{(p)}|=|G^{(p)}|$. In addition, since the order of $\epsilon$ and $p$ are coprime,
for each element of $G^{(p)}$, there exists at most one lifting contained in $\widetilde{G}^{(p)}$.
So for each $\sigma\in G^{(p)}$, there exists a unique lifting $\sigma^{\prime}$ of $\sigma$ such that
$\sigma^{\prime}\in\widetilde{G}^{(p)}$. Then the map
$\sigma\mapsto\sigma^{\prime}\mod\langle\epsilon\rangle$ gives the isomorphism
$G^{(p)}\cong {\widetilde{G^{(p)}}}/\langle\epsilon\rangle$ and the map
$\sigma\mapsto\sigma^{\prime}$ gives the isomorphism
${G^{(p)}}\cong\widetilde{G}^{(p)}$.

Since $\widetilde G^{(p)}=(\widetilde G^{(p)})^{w}$, by Lemma \ref{lift} we see that
$\widetilde G^{(p)}$ is contained in the center of $\widetilde{G}$. So
$\widetilde G^{(p)}$ is a normal subgroup of $\widetilde G$. In addition, as
$|\widetilde{G}|=|\widetilde{G}^{(p)}|\cdot|\widetilde{G^{\prime}}|$ and
$\gcd(|\widetilde{G}^{(p)}|,|\widetilde{G^{\prime}}|)=1$, we have
$\widetilde{G}=\widetilde{G}^{(p)}\times\widetilde{G^{\prime}}$.
\end{proof}

Next we need to investigate the subgroup $\widetilde{G^{\prime}}$.

For each generator
$\sigma_{P_i}\in G^{\prime}$ ($1\le i\le n$), according to Lemma \ref{lift} we fix a lifting
$\tilde\sigma_{P_i}$ of $\sigma_{P_i}$ in $\widetilde G$ as follows.

If $P_{i}\ne P,Q$, we define
\begin{equation}
\tilde{\sigma}_{P_{i}}(\sqrt[w]{u})=\sqrt[w]{u}. \notag
\end{equation}
In fact, by \cite[Section 3.3, 4.3 and 5.1]{BY}, we have
$\sigma_{P_{i}}(u)=u$.

If $P_{i}=P$ or $Q$, we define
\begin{equation}
\tilde{\sigma}_{P_{i}}(\sqrt[w]{u})=v_{\sigma_{P_{i}}}\sqrt[w]{u},
 \notag
\end{equation}
where $v_{\sigma_{P_{i}}}\in K$ is given by
\begin{equation}
v_{\sigma_{P}}=\left(\sqrt[w]{(-1)^{d_{Q}}}{\rm sin}{\bf
c}_{\sigma_{P}}\right)^{-1} \quad {\rm and} \quad
v_{\sigma_{Q}}=\left(\sqrt[w]{(-1)^{d_{P}}}{\rm sin}{\bf
c}_{\sigma_{Q}}\right)^{-1},
 \notag
\end{equation}
here ${\bf c}_{\sigma_{P}}$ and ${\bf c}_{\sigma_{Q}}$ were defined
in \cite[Section 4.2.5]{BY}. By \cite[Section 3.4.2 and 3.4.3]{LY}, we have
$\frac{u}{\sigma_{P}(u)}=(\sqrt[w]{(-1)^{d_{Q}}}{\rm sin}{\bf
c}_{\sigma_{P}})^{w}$ with $\sqrt[w]{(-1)^{d_{Q}}}{\rm sin}{\bf
c}_{\sigma_{P}} \in K^{*}$.
%Similarly for $\tilde{\sigma}_{Q}$.

Hence, we have
\[
\widetilde{G^{\prime}}=\langle\tilde\sigma_{P_1},\cdots,\tilde\sigma_{P_n},\epsilon\rangle.
\]

Now we study the relations among these generators of
$\widetilde{G^{\prime}}$. First $\epsilon$ commutes with each generator.
For $L,R \in S_{M}, L<R$, set
$\alpha_{LR}=\frac{\sigma_{L}(v_{\sigma_{R}})/v_{\sigma_{R}}}{\sigma_{R}(v_{\sigma_{L}})/v_{\sigma_{L}}}$.
By Lemma \ref{lift}, we have
$\tilde{\sigma}_{L}\tilde{\sigma}_{R}=\tilde{\sigma}_{R}\tilde{\sigma}_{L}\epsilon^{{\rm
log}_{\gamma}\alpha_{LR}}$.
By \cite[Section 3.5 The Log Wedge Formula, 3.6 The Auxiliary Formula and 5.1 The Main Formula]{BY}, we see that the
generators $\tilde\sigma_{P_i}$ commute with each other except for
the relation
\[
\tilde{\sigma}_{P}\tilde{\sigma}_{Q}=\tilde{\sigma}_{Q}\tilde{\sigma}_{P}\epsilon^{-1}.
\]
So in fact, $\widetilde{G^{\prime}}=\langle\tilde\sigma_{P_1},\cdots,\tilde\sigma_{P_n}\rangle.$

By definition, if $P_{i}\ne P,Q$, then we have
ord$(\tilde{\sigma}_{P_{i}})$=ord$(\sigma_{P_{i}})$. Finally, we
need to compute the orders of $\tilde{\sigma}_{P}$ and
$\tilde{\sigma}_{Q}$.

Let $L\in S_M$ and let $I_{L}$ be the inertia group of $L$ in $K$.
It is known that $I_{L}\cong(\mathbb{A}/L^{r}\mathbb{A})^{*}\cong
(\mathbb{A}/ L^{r}\mathbb{A})^{(1)}\times
(\mathbb{A}/L\mathbb{A})^{*} $, where $r$ is the maximal power of
$L$ such that $L^{r}|M$. We fix an inertia group $\widetilde{I}_{L}$ of $L$ in
$\widetilde{K}$. Let $\widetilde{L}$ be a prime ideal in
$\widetilde{K}$ above $L$ such that the inertia group
$I(\widetilde{L}/L)=\widetilde{I}_{L}$. Let $\widetilde{G}_{i}$ be
the $i$-th ramification group of $\widetilde{L}|L$, $i \ge -1$. Then
by \cite[III 8.6]{HS}, $\widetilde{G}_{0}=\widetilde{I}_{L}$,
$\widetilde{I}_{L}/\widetilde{G}_{1}$ is cyclic of order relatively
prime to $p$, and $\widetilde{G}_{1}$ is the unique $p$-Sylow
subgroup of $\widetilde{I}_{L}$ which is contained in the center of
$\widetilde{I}_{L}$ by Lemma \ref{lift}.

Put $G_{L}=\langle\sigma_{L}\rangle\subset I_{L}$ and
$\widetilde{G}_{L}=\widetilde{G_{L}}\bigcap \widetilde{I}_{L}$,
where $\widetilde{G_{L}}$ is the subgroup of $\widetilde{G}$
consisting of all liftings of the elements of $G_{L}$. Denote by $e_{L}$
the ramification index of any prime ideal of $\widetilde{K}$ lying above $L$ in the extension $\widetilde{K}/K$.
\begin{prop} \label{inertia}
$\widetilde{I}_{L}$ is an abelian group with $\widetilde{I}_{L}=\widetilde{G}_{1}\times \widetilde{G}_{L}$, where
$\widetilde{G}_{1}\cong(\mathbb{A}/
L^{r}\mathbb{A})^{(1)}$ and $\widetilde{G}_{L}$ is a cyclic group                % The inertia group
generated by a lifting of $\sigma_{L}$. Furthermore,
 all the liftings of $\sigma_{L}$ have the same order
$e_{L}\cdot {\rm ord}(\sigma_{L})$.
\end{prop}
\begin{proof}
Set $H=\langle\epsilon\rangle$. The canonical homomorphism $\widetilde{I}_{L}
\to I_{L}$, $\tilde{\sigma}\mapsto\tilde{\sigma}|_{K}$, induces an
isomorphism $\widetilde{I}_{L}/(\widetilde{I}_{L}\cap H) \cong
I_{L}$.  Since $\widetilde{K}$ is abelian over $K$,
$\widetilde{I}_{L}\cap H$ is the inertia group of $L$ in the field
extension $\widetilde{K}/K$ by \cite[Proposition 9.8]{Rosen}. So the order of
$\widetilde{I}_{L}\cap H$ is $e_{L}$, and thus $|\widetilde{I}_{L}|=e_{L}|I_{L}|$.
Noticing that $\widetilde{G}_{L}\cap H=\widetilde{I}_{L}\cap H$ and the above homomorphism also induces an
isomorphism $\widetilde{G}_{L}/(\widetilde{G}_{L}\cap H) \cong
G_{L}$, we have $|\widetilde{G}_{L}|=e_{L}|G_{L}|$.

Since $\widetilde{G}_{1}$ is contained in the center of $\widetilde{I}_{L}$
and $\widetilde{I}_{L}/\widetilde{G}_{1}$ is cyclic, we see that
$\widetilde{I}_{L}$ is abelian. Noticing that
$\gcd(|\widetilde{G}_{L}|,|\widetilde{G}_{1}|)=1$ and
$|\widetilde{I}_{L}|=|\widetilde{G}_{1}|\cdot |\widetilde{G}_{L}|$,
we have $\widetilde{I}_{L}=\widetilde{G}_{1}\times
\widetilde{G}_{L}$. So $\widetilde{G}_{L}\cong
\widetilde{I}_{L}/\widetilde{G}_{1}$ is cyclic. Since
$I_{L}=\widetilde{I}_{L}|_{K}=\widetilde{G}_{1}|_{K}\times
\widetilde{G}_{L}|_{K}=\widetilde{G}_{1}|_{K}\times G_{L}$ and
$\widetilde{G}_{1}|_{K} \cong \widetilde{G}_{1}/(\widetilde{G}_{1}
\cap H)=\widetilde{G}_{1}$, we have $\widetilde{G}_{1}\cong
(\mathbb{A}/ L^{r}\mathbb{A})^{(1)}$.

As $\widetilde{G}_{L}|_{K}=G_{L}$, there exists a lifting
$\sigma_{L}^{\prime}$ of $\sigma_{L}$ belonging to
$\widetilde{G}_{L}$. Since the order of $\epsilon$ is a factor of
the order of $\sigma_{L}$, all the liftings of $\sigma_{L}$ have the same order. If
the order of $\sigma_{L}^{\prime}$ is less than
$|\widetilde{G}_{L}|$, then it is easy to show that the order of
each element is also less than $|\widetilde{G}_{L}|$. But
$\widetilde{G}_{L}$ is cyclic. Hence $\sigma_{L}^{\prime}$ must be a
generator of $\widetilde{G}_{L}$.
\end{proof}

\begin{rem}
{\rm The extension $\widetilde{K}/k$ gives us an example of a non-abelian function field extension with abelian inertia groups.}
\end{rem}

If  $P_{i}\ne P$ and $Q$, then $e_{P_{i}}=1$. Now we need to calculate the
ramification indices $e_{P}$ and $e_{Q}$.

Let $R$ be a monic irreducible polynomial in $\mathbb{A}$ and           % calculate the ramification index
$A\in\mathbb{A}$ be coprime to $R$. Recall that the $(q-1)$th
residue symbol $(\frac{A}{R})\in \mathbb{F}_{q}^{*}$ is defined by
\begin{equation}
(\frac{A}{R}) \equiv A^{\frac{|R|-1}{q-1}}\quad {\rm mod} \, R.
\notag
\end{equation}

Let $v_{P}$ be the additive valuation in $k^{ab}$ associated to
$P$ defined in \cite[Section 6]{BY}. Notice that the restriction of $v_{P}$ in
$k({\bf e}_C(\frac{\bar{\pi}}{P}))$ is the normalized valuation of
 $k({\bf e}_C(\frac{\bar{\pi}}{P}))$ associated to
$P$. By \cite[Proposition 6.2]{BY}, we have
\begin{equation}
 v_{P}({\bf e}({\bf a}_{PQ})) \equiv {\rm
log}_{\gamma}(\frac{Q}{P}) \quad {\rm and} \quad v_{Q}({\bf e}({\bf
a}_{PQ})) \equiv -{\rm log}_{\gamma}(\frac{P}{Q})  \quad ({\rm mod}
\, w).\notag
\end{equation}
In addition, $v_{P}(P)$ equals to the ramification index $|P|-1$
of $P$ in $k({\bf e}_C(\frac{\bar{\pi}}{P}))/k$. Thus
$v_{P}(\sqrt{P})\equiv \frac{w}{2}d_{P}$ (mod $w$). Similarly, we
have $v_{Q}(\sqrt{Q})\equiv \frac{w}{2}d_{Q}$ (mod $w$).

Furthermore, combining with the reciprocity law
$(\frac{Q}{P})=(-1)^{d_{P}d_{Q}}(\frac{P}{Q})$, see \cite[Theorem 3.5]{Rosen}, we have
\begin{equation}
 v_{P}(u) \equiv {\rm
log}_{\gamma}(\frac{P}{Q}) \quad {\rm and} \quad v_{Q}(u) \equiv
-{\rm log}_{\gamma}(\frac{Q}{P})  \quad ({\rm mod} \, w). \notag
\end{equation}

By \cite[III 7.3]{HS}, noticing that the valuations there are different from what we use here,
we have $e_{P}=\frac{w}{\gcd(w,v_{P}(u))}$ and
$e_{Q}=\frac{w}{\gcd(w,v_{Q}(u))}$. So
\begin{equation}
e_{P}=\frac{w}{\gcd(w,{\rm log}_{\gamma}(\frac{P}{Q}))}
\quad\text{and}\quad e_{Q}=\frac{w}{\gcd(w,{\rm
log}_{\gamma}(\frac{Q}{P}))}. \notag
\end{equation}

Finally we get the following theorem.                                             % Theorem of the Galois group
\begin{theorem}\label{galois}
We have $\widetilde G=\widetilde {G}^{(p)}\times\widetilde {G^{\prime}}$,
where $\widetilde {G}^{(p)}$ is the p-Sylow subgroup of $\widetilde
G$ which is contained in the center of $\widetilde G$, and $\widetilde
{G^{\prime}}=\langle\tilde\sigma_{P_1},\cdots,\tilde\sigma_{P_n},\epsilon\rangle$.
The generators $\tilde\sigma_{P_i}$ and $\epsilon$ commute with each
other except for the relation
$\tilde{\sigma}_{P}\tilde{\sigma}_{Q}=\tilde{\sigma}_{Q}\tilde{\sigma}_{P}\epsilon^{-1}$.
In addition, for $P_i\ne P,Q$, we have {\rm
ord}$(\tilde{\sigma}_{P_i})$={\rm ord}$(\sigma_{P_i})$, and for
$P_i= P$ or $Q$, we have
\begin{equation}
{\rm ord}(\tilde{\sigma}_{P})=\frac{w}{\gcd(w,{\rm
log}_{\gamma}(\frac{P}{Q}))}\cdot {\rm ord}(\sigma_{P}) \quad and
\quad {\rm ord}(\tilde{\sigma}_{Q})=\frac{w}{\gcd(w,{\rm
log}_{\gamma}(\frac{Q}{P}))}\cdot {\rm ord}(\sigma_{Q}).
 \notag
\end{equation}
\end{theorem}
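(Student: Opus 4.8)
The plan is to assemble the theorem from the results already established in this section rather than to prove anything genuinely new, since each of its assertions has been isolated in a preceding lemma, proposition, or explicit computation. First, the direct product decomposition $\widetilde{G}=\widetilde{G}^{(p)}\times\widetilde{G'}$ is precisely the conclusion of Lemma 2.2, so I would simply invoke it, together with the presentation $\widetilde{G'}=\langle\tilde\sigma_{P_1},\ldots,\tilde\sigma_{P_n},\epsilon\rangle$ recorded immediately after the liftings $\tilde\sigma_{P_i}$ were fixed. For the relations among these generators I would quote the computation in [Sect.3.5, 3.6 and 5.1, 2] cited above: $\epsilon$ is central because it generates $\mathrm{Gal}(\widetilde K/K)$, the $\tilde\sigma_{P_i}$ commute pairwise, and the only nontrivial commutator is $\tilde\sigma_P\tilde\sigma_Q=\tilde\sigma_Q\tilde\sigma_P\epsilon^{-1}$. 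This is the single place where the non-abelian structure enters, and by the multiplication rule of Lemma 2.1 it is controlled by the factor $i(\sigma_P,\sigma_Q)\in\mathbb{F}_q^{*}$.

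For the orders I would split into the two cases. When $P_i\ne P,Q$ the lifting was defined by $\tilde\sigma_{P_i}(\sqrt[w]{u})=\sqrt[w]{u}$, which is legitimate because $\sigma_{P_i}(u)=u$, equivalently $e_{P_i}=1$; applying Proposition 2.3 with $e_{P_i}=1$ then yields $\mathrm{ord}(\tilde\sigma_{P_i})=\mathrm{ord}(\sigma_{P_i})$. When $P_i=P$ or $Q$, Proposition 2.3 asserts that every lifting of $\sigma_L$ has order $e_L\cdot\mathrm{ord}(\sigma_L)$, so I would substitute the ramification indices $e_P=w/(w,\log_\gamma(\frac{P}{Q}))$ and $e_Q=w/(w,\log_\gamma(\frac{Q}{P}))$ computed just above from [III 7.3, 6] and the reciprocity law [Theorem 3.5, 5]; this gives the two displayed formulas directly.

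The only real subtlety, and hence the step I would guard most carefully, is that the stated orders must be independent of the choice of lifting, so that the formulas apply to the specific $\tilde\sigma_P$ and $\tilde\sigma_Q$ fixed earlier through $v_{\sigma_P}$ and $v_{\sigma_Q}$. This is exactly what Proposition 2.3 secures: all liftings of $\sigma_L$ share the common order $e_L\cdot\mathrm{ord}(\sigma_L)$, the underlying reason being that the ambiguity in a lifting is a power of $\epsilon$, whose order $w$ divides $|G_L|=\mathrm{ord}(\sigma_L)=|L|-1$. Everything else is bookkeeping, so I would keep the proof short, citing Lemma 2.2, the relation and ramification-index computations, and Proposition 2.3 in turn.
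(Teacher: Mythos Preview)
Your proposal is correct and matches the paper's approach exactly: the paper gives no separate proof of this theorem, treating it as a summary of Lemma~2.2, the fixed liftings and their commutation relations (via [Sect.~3.5, 3.6 and 5.1, 2]), Proposition~2.3, and the explicit computation of $e_P$ and $e_Q$. Your plan to assemble these pieces, with Proposition~2.3 guaranteeing the lifting-independence of the orders, is precisely what the paper does.
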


Notice that for each $1\le i\le n$, ord$(\sigma_{P_i})=\Phi(P_{i})$.

%In the last part of this section we would like to give some applications of Theorem \ref{galois}.

\begin{cor}
$\widetilde{K}/k$ is a solvable extension.
\end{cor}
\begin{proof}
Notice that the commutator subgroup of $\widetilde{G}$ is $\langle\epsilon\rangle$.
\end{proof}

\begin{cor}
In the extension $\widetilde{K}/K$, all ramified prime ideals of $K$ are tamely ramified.
\end{cor}

\begin{cor}
For any prime ideal $\mathfrak{p}$ of $K$ not above $P$ and $Q$, it is unramified in $\widetilde{K}/K$.
\end{cor}

\begin{cor}
The prime ideals of $K$ above $P$ (resp. $Q$) are unramified if and only if
$(\frac{P}{Q})=1$ (resp. $(\frac{Q}{P})=1$).
\end{cor}

In addition, all infinite primes of $K$ are unramified in $\widetilde{K}/K$,
see Lemma \ref{infinite}.

\begin{cor}
If $2|d_{P}d_{Q}$, then we have $e_{P}=e_{Q}$.
\end{cor}

\begin{cor}
Suppose $2\nmid d_{P}d_{Q}$. We have:

{\rm (1)} If $e_{P}=1$, then $e_{Q}=2$.

{\rm (2)} If $e_{P}=w$, then $e_{Q}=w$ or $\frac{w}{2}$. Moreover, $e_{Q}=w$ if and only if $4|w$.
\end{cor}
\begin{proof}
Notice that
\begin{equation}
 {\rm log}_{\gamma}(\frac{P}{Q})  \equiv
{\rm log}_{\gamma}(\frac{Q}{P})+\frac{w}{2}  \quad ({\rm mod} \, w). \notag
\end{equation}
\end{proof}
If we exchange the positions of $e_{P}$ and $e_{Q}$, the above corollary is also true.

\begin{cor}
Let $L$ be a monic irreducible polynomial in $\mathbb{A}$, then $L$ is ramified in
$\widetilde{K}/k$ if and only if $L|M$.
\end{cor}

\section{The genus formula}                                                   % Section 3

In this section we compute the genus of $\widetilde K$. We calculate
it by using Hasse's genus formula on Kummer extensions, which states
that for an $m$-th Kummer extension $E/F$ of algebraic function
fields, where $m$ is relatively prime to the characteristic of
$F$, we have
\begin{equation}
g_{E}=1+\frac{m}{[\mathbb{F}_{E}:\mathbb{F}_{F}]}\left[g_{F}-1+\frac{1}{2}\sum\limits_{\mathfrak{p}\in
\mathbb{P}_{F}}\left(1-\frac{1}{e_{\mathfrak{p}}}\right){\rm
deg}\mathfrak{p}\right], \notag
\end{equation}
where $g_{E}$ and $g_{F}$ are the genus of $E$ and $F$ respectively,
$\mathbb{F}_{E}$ and $\mathbb{F}_{F}$ are the constant fields of $E$
and $F$ respectively, $\mathbb{P}_{F}$ is the set of primes of $F$,
and $e_{\mathfrak{p}}$ is the ramification index of $\mathfrak{p}$
in $E/F$, see \cite[III 7.3]{HS}.

Recall that $M$ has the prime decomposition
$M=P_{1}^{r_{1}}P_{2}^{r_{2}}\cdots P_{n}^{r_{n}}$.
For the genus of $K$, we quote a formula from \cite[Theorem 12.7.2]{Sa}.
\begin{theorem}
We have                                                            % genus of the cyclotomic function field
\begin{equation}
g_{K}=\left[\frac{q-2}{2(q-1)}-1\right]\Phi(M)+\frac{1}{2}\sum\limits_{i=1}^{n}s_{i}d_{i}\Phi(M/P_{i}^{r_{i}})+1,
 \notag
\end{equation}
where $d_{i}=d_{P_{i}}$,
$s_{i}=r_{i}\Phi(P_{i}^{r_{i}})-q^{d_{i}(r_{i}-1)}$ and
$\Phi(M)=|(\mathbb{A}/M)^{*}|$.
\end{theorem}

\begin{lem}
The constant field of $\widetilde{K}$ is $\mathbb{F}_{q}$.                % geometric extension
\end{lem}
\begin{proof}
Since the constant field of $K$ is $\mathbb{F}_{q}$, it suffices to
show that $u \notin \mathbb{F}_{q}$.

Suppose that $u \in \mathbb{F}_{q}$. Then for any
$\sigma \in G$, $\sigma(u)=u$.
We can get a lifting $\tilde{\sigma}$ of $\sigma$ defined by $\tilde{\sigma}(\sqrt[w]{u})=\sqrt[w]{u}$. Hence
$\widetilde{G}$ is an abelian group. This leads to a contradiction.
\end{proof}

In Section 2 we have computed the ramification indices in
$\widetilde{K}/K$ of all finite primes of $K$. To calculate the
genus of $\widetilde{K}$, we need to compute those of the infinite
primes.
\begin{lem}\label{infinite}
The infinite primes of $K$ are unramified in $\widetilde{K}/K$.
\end{lem}
\begin{proof}
Let $k_{\infty}\subset\Omega$ be the completion of $k$ at the place
$1/T$. Let $K^{+}=K\cap k_{\infty}$ be the maximal real subfield of
$K$. By \cite[Section 4.3]{BY}, we know ${\rm sin}{\bf a}_{PQ}\in k_{\infty}$.
It is known that for any monic square-free
polynomial $f(T)$ in $\mathbb{F}_{q}[T]$ with even degree, we have
$\sqrt{f(T)} \in k_{\infty}$. So $u \in k_{\infty}$. Thus $u \in K^{+}$.

Let $E=K^{+}(\sqrt[w]{u})$. Then $\widetilde{K}=EK$ and
$[E:K^{+}]=w$. Let $\infty$ be an arbitrary infinite prime of $K^{+}$,
$\infty_{1}$ an infinite prime of $K$ above $\infty$, $\infty_{2}$ an
infinite prime of $E$ above $\infty$, and $\widetilde{\infty}$ an
infinite prime of $\widetilde{K}$ above $\infty_{1}$. By \cite[Theorem 12.14]{Rosen},
the ramification index $e(\infty_{1}/\infty)=w$. Then by
Abhyankar's Lemma, see \cite[III 8.9]{HS}, the ramification index
$e(\widetilde{\infty}/\infty)=w$. Since
$e(\widetilde{\infty}/\infty)=e(\widetilde{\infty}/\infty_{1})\cdot
e(\infty_{1}/\infty)$, we have $e(\widetilde{\infty}/\infty_{1})=1$.
Thus $\infty_{1}$ is unramified in $\widetilde{K}/K$. Since
$\widetilde{K}$ is Galois over $K$, all infinite primes of $K$ are
unramified in $\widetilde{K}/K$.
\end{proof}

Now we can get the genus formula of
$\widetilde{K}$.
\begin{theorem} We have                                                        % genus of the quasi-cyclotomic function field
\begin{equation}
g_{\widetilde{K}}=1+w\left[g_{K}-1+\frac{1}{2}\left(1-\frac{1}{e_{P}}\right)
d_{P}\Phi(M/P^{r_{P}})+\frac{1}{2}\left(1-\frac{1}{e_{Q}}\right)d_{Q}\Phi(M/Q^{r_{Q}})\right],
\notag
\end{equation}
where $r_{P}$ and $r_{Q}$ are the maximal powers of $P$ and $Q$ such that $P^{r_{P}}|M$ and $Q^{r_{Q}}|M$
respectively, $e_{P}=\frac{w}{\gcd(w,{\rm log}_{\gamma}(\frac{P}{Q}))}$
and $e_{Q}=\frac{w}{\gcd(w,{\rm log}_{\gamma}(\frac{Q}{P}))}$.
\end{theorem}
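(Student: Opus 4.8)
The plan is to apply Hasse's genus formula for Kummer extensions, stated at the opening of this section, directly to $\widetilde{K}/K$. Since $\widetilde{K}=K(\sqrt[w]{u})$ with $[\widetilde{K}:K]=w=q-1$ and $w$ is prime to the characteristic $p$, the hypotheses hold with $m=w$, $F=K$, $E=\widetilde{K}$. The constant field of $\widetilde{K}$ has already been noted to be $\mathbb{F}_{q}$, and that of $K$ is likewise $\mathbb{F}_{q}$, so $[\mathbb{F}_{\widetilde{K}}:\mathbb{F}_{K}]=1$ and the leading factor in the formula is simply $w$. The whole computation therefore reduces to evaluating the ramification sum $\frac{1}{2}\sum_{\mathfrak{p}\in\mathbb{P}_{K}}(1-\frac{1}{e_{\mathfrak{p}}})\,{\rm deg}\,\mathfrak{p}$.

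I would first determine which primes of $K$ actually ramify in $\widetilde{K}/K$. By the analysis preceding Theorem 2.4, among the finite primes only those over $P$ and over $Q$ ramify, all other finite primes satisfying $e_{P_{i}}=1$; and the paragraph just before the present statement shows, via Abhyankar's Lemma, that every infinite prime of $K$ is unramified in $\widetilde{K}/K$. Since an unramified prime contributes $1-\frac{1}{1}=0$ to the sum, only the primes of $K$ lying over $P$ and over $Q$ remain. As $\widetilde{K}/K$ is Galois, all primes over $P$ share the common ramification index $e_{P}=\frac{w}{(w,{\rm log}_{\gamma}(\frac{P}{Q}))}$, and similarly those over $Q$ share $e_{Q}$, both computed in Section 2.

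The only substantive step is then to compute the degree sums $\sum_{\mathfrak{p}\mid P}{\rm deg}\,\mathfrak{p}$ and $\sum_{\mathfrak{p}\mid Q}{\rm deg}\,\mathfrak{p}$ over $K$. Here I would use that $K/k$ is abelian with ${\rm Gal}(K/k)\cong(\mathbb{A}/M)^{*}$ and inertia group at $P$ isomorphic to $(\mathbb{A}/P^{r_{P}})^{*}$, so the (constant) ramification index of $P$ in $K/k$ equals $\Phi(P^{r_{P}})$. Writing ${\rm deg}\,\mathfrak{p}=f(\mathfrak{p}/P)\,d_{P}$ and applying the fundamental identity $\sum_{\mathfrak{p}\mid P}e(\mathfrak{p}/P)f(\mathfrak{p}/P)=[K:k]=\Phi(M)$ with $e(\mathfrak{p}/P)=\Phi(P^{r_{P}})$ constant, multiplicativity of $\Phi$ gives $\sum_{\mathfrak{p}\mid P}{\rm deg}\,\mathfrak{p}=d_{P}\,\Phi(M)/\Phi(P^{r_{P}})=d_{P}\,\Phi(M/P^{r_{P}})$, and symmetrically for $Q$. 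Substituting these two sums and the indices $e_{P},e_{Q}$ into Hasse's formula produces exactly the asserted expression. No genuine obstacle arises: the result is a direct instance of the genus formula, and the only care required is the bookkeeping of the degree sums, which rests on the constancy of the ramification index in the abelian extension $K/k$ together with the multiplicativity of $\Phi$.
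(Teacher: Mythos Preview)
Your proposal is correct and follows essentially the same route as the paper: apply Hasse's Kummer genus formula to $\widetilde{K}/K$, discard all unramified primes (finite primes not over $P,Q$ and all infinite primes), and reduce to the degree sums over the primes above $P$ and $Q$. The only cosmetic difference is that the paper writes the degree sum as $g_{P}f_{P}d_{P}=\Phi(M/P^{r_{P}})d_{P}$ using constancy of both $e$ and $f$ in the Galois extension $K/k$, whereas you obtain the same value from the fundamental identity using only constancy of $e$; the arguments are equivalent.
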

\begin{proof}
By Hasse's formula, we have
\begin{equation}
g_{\widetilde{K}}=1+w\left[g_{K}-1+\frac{1}{2}\sum\limits_{\substack{{\rm prime}\,\mathfrak{p}\,{\rm in}\,K \\
\mathfrak{p}|P\, {\rm or}\,\mathfrak{p}|
Q}}\left(1-\frac{1}{e_{\mathfrak{p}}}\right){\rm
deg\,}\mathfrak{p}\right],
 \notag
\end{equation}
where the sum is over all the inequivalent primes above $P$ or $Q$,
 $e_{\mathfrak{p}}$ is the ramification index of $\mathfrak{p}$ in
$\widetilde{K}/K$, and for $\mathfrak{p}|P$, ${\rm
deg\,}{\mathfrak{p}}=f(\mathfrak{p}/P)d_{P}$, $f(\mathfrak{p}/P)$ is the residue class degree,
similarly for $\mathfrak{p}|Q$.

We assume that there are $g_{P}$ and $g_{Q}$ different prime ideals
in $K$ above $P$ and $Q$ respectively. Then
\begin{equation}
g_{\widetilde{K}}=1+w\left[g_{K}-1+\frac{1}{2}\left(1-\frac{1}{e_{P}}\right)g_{P}f_{P}d_{P}+\frac{1}{2}
\left(1-\frac{1}{e_{Q}}\right)g_{Q}f_{Q}d_{Q}\right],
 \notag
\end{equation}
where $f_{P}$ and $f_{Q}$ are the residue class degrees of $P$ and
$Q$ in $K/k$ respectively.
Since $g_{P}f_{P}=\Phi(M/P^{r_{P}})$ and
$g_{Q}f_{Q}=\Phi(M/Q^{r_{Q}})$, we get the formula.
\end{proof}

\section{Acknowledgement}
We are very grateful to the referee for his careful reading and many valuable suggestions.
Especially, we thank him for telling us the genus formula of the cyclotomic function field in \cite{Sa}.

\end{document}